\newcommand*{\email}[1]{%
    \normalsize\href{mailto:#1}{#1}\par
    }
\definecolor{wiasblue}   {cmyk}{1.0, 0.60, 0, 0}
\definecolor{mlugreen}{RGB}{0,81,51}
\providecommand{\keywords}[1]
{
  \small	
  \textbf{\textit{Keywords---}} #1
}
\def\Z{\mathbb Z}
\def\E{\mathbb E}
\def\P{\mathbb P}
\def\Q{\mathbb Q}
\def\R{\mathbb R}
\def\mc{\mathcal}
\def\ms{\mathsf}
\def\la{\lambda}
\def\g{\gamma}
\def\a{\alpha}
\def\k{\kappa}
\def\es{\emptyset}
\def\one{\mathbbmss{1}}
\def\ms{\mathsf}
\def\mc{\mathcal}
\def\Pois{F_{\ms{Poi}}}
\def\Str{\ms{Str}}
\def\d{\,\mathrm d}
\def\Xs{X^{\ms S}}
\def\Xm{X^{\ms M}}
\def\Xl{X^{\ms L}}
\def\pc{p^{\ms{Cond}}}
\def\pcmc{p^{\ms{CMC}}}
\def\pp{q^{\ms{IS}}}
\def\b{\beta}
\def\lap{\la^{\ms{PS}}}
\def\rp{\rho^{\ms{PS}}}
\def\ff{\infty}
\def\s{\sigma}
\def\FF{\mc F}
\def\lw{\leftarrow}
\newtheorem{theorem}{Theorem}
\newtheorem{remark}{Remark}
\title{Rare Events in Random Geometric Graphs}
\author{
  Christian Hirsch\\ {\normalfont University of Groningen}\\ {\normalfont \email{c.p.hirsch@rug.nl}}\\
  \and
  Sarat B. Moka \\ {\normalfont University of Queensland}\\ {\normalfont \email{s.babumoka@uq.edu.au}}\\
  \and
  Thomas Taimre \\{\normalfont University of Queensland}\\ {\normalfont \email{t.taimre@uq.edu.au}}\\
  \and
  \ \\ Dirk P. Kroese\\ {\normalfont University of Queensland }\\ {\normalfont \email{kroese@maths.uq.edu.au}}\\
}
\begin{document}

\maketitle

\begin{abstract}
        This work introduces and compares approaches for estimating rare-event probabilities related to the number of edges in the random geometric graph on a Poisson point process. In the one-dimensional setting, we derive closed-form expressions for a variety of conditional probabilities related to the number of edges in the random geometric graph and develop conditional Monte Carlo algorithms for estimating rare-event probabilities on this basis. We prove rigorously a reduction in variance when compared to the crude Monte Carlo estimators and illustrate the magnitude of the improvements in a simulation study. In higher dimensions, we leverage conditional Monte Carlo to remove the fluctuations in the estimator coming from the randomness in the Poisson number of nodes. Finally, building on conceptual insights from large-deviations theory, we illustrate that importance sampling using a Gibbsian point process can further substantially reduce the estimation variance.
\end{abstract}
\keywords{Rare Event, Random Geometric Graph, Conditional Monte Carlo, Strauss process}
\vspace{-.1cm}

\maketitle
%
%
\section{Introduction}
\label{intrSec}

%
%

In this paper, we focus on rare events associated with the number of edges in the {\em Gilbert graph} $G(X)$ on a homogeneous Poisson point process
$X = \{X_i\}_{i \ge 1}$ with intensity $\la > 0$ in $\R^d$, for $d \ge 1$. Consequently, the nodes of $G(X)$ are the points of $X$ and there is an edge between $X_i, X_j \in X$ if $\|X_i - X_j\| \le 1$, where the upper bound $1$ is the \emph{threshold} of the Gilbert graph and $\|\cdot \|$ denotes the Euclidean norm.  Our goal is to analyze the probability of the rare event that the number of edges in a bounded sampling window $W \subset \R^d$ deviates considerably from its expected value. More succinctly, we ask:
\bigskip
\begin{center}
	\emph{What is the probability that the Gilbert graph has at least twice its expected number of edges?}\\[2ex]
	\emph{What is the probability that the Gilbert graph has at most half its expected number of edges?}
\end{center}
\bigskip

These seemingly innocuous questions have an intriguing connection to large deviations of heavy-tailed sums. Indeed, suppose that $\{W_i\}$ is an equal-volume partition of $W$ such that the diameter of each $W_i$ is at most 1. Then, letting $Z_i := |X \cap W_i|$ denote the number of Poisson points in $W_i$, the edges entirely inside $W_i$ contribute $Z_i (Z_i - 1)/2$ to the total edge count. Since $Z_i$ follows a Poisson distribution, the tail probability $\P(Z_i \ge n)$ is of the order $\exp(-c\, n \log n)$ for some constant $c > 0$. Hence, the tails of $Z_i^2$ are of the order $\exp(- c\, \sqrt n \log n/2)$ and {therefore $Z_i^2$ does not have exponential moments.} This is critical to note, because it means that the problem at hand is tightly related to large deviations of heavy-tailed sums, where large deviations typically come from extreme realizations of the largest summand \cite{asmussen,rw,rev}. \\

The analysis and simulations in this paper rely on two methods: {\em  conditional Monte Carlo (MC)}  and \emph{importance sampling}; see Chapters 9.4 and 9.7 of \cite{handbook}, respectively. Importance sampling techniques have been in use for the analysis of rare events in the setting of the Erd\H{o}s--R\'enyi graph \cite{bhamidi}, which is different from the Gilbert graph considered here. Other than that, there has been very little literature on the topic.  Note that the same analysis can be extended to  Gilbert graphs with the threshold not equal to $1$ by modifying the size of the window $W$ and the intensity $\la$ of the Poisson point process. \\

The rest of the presentation is organized in three parts. First, in Section \ref{smallKSec}, we explore the potential of conditional Monte Carlo in a one-dimensional setting, where we can frequently  derive explicit closed-form expressions. Surprisingly, when moving to the far end of the upper tails, it is possible to avoid simulation altogether, as we derive a fully analytic representation. Then, in Section \ref{rbSec}, we move to higher dimensions. Here, we apply conditional MC to remove the randomness coming from the random number of nodes. Finally, in Section \ref{isSec}, we present a further refinement of the conditional MC estimator, by combining it with importance sampling using a Strauss-type Gibbs process. To ease notation, we henceforth identify the Gilbert graph with its edge set, so that $|G(X \cap W)|$ yields the number of edges in the Gilbert graph on $X \cap W$.

\section{Conditional MC in dimension 1}
\label{smallKSec}

%
%
In this section, we consider the one-dimensional setting. More precisely, we consider a line segment $W = [0, w]$ as sampling window. In Section \ref{edgeSec}, we describe a specific conditional MC scheme that leads to estimators for the rare-event probability of the number of edges being small. In a simulation study in Section \ref{smallSimSec}, we show that these new estimators are substantially better than a crude MC approach.\\

Then, Section \ref{missEdgeSec} discusses rare events corresponding to the number of missing edges, i.e., the number of point-pairs that are not connected by an edge. The analysis is motivated from the observation that the Erd\H os--R\'enyi graph with edge probability $p \in [0, 1]$ exhibits a striking duality with its complement. Specifically, the missing edges of this graph again form an Erd\H os--R\'enyi graph but now with probability $1 - p$. In Section \ref{missEdgeSec}, we point out that in the Gilbert graph such a duality is much more involved.
We still elucidate how to compute the probability of observing no missing edges or precisely one missing edge. \\

%
%
In this section, we assume that the points $\{X_i\}_{i \ge 1}$ of the Poisson point process on $[0, \ff)$ are ordered according to their occurrence; that is, $X_i \le X_j$, whenever $i \le j$. 

%
%
\subsection{Few edges}
\label{edgeSec}
Henceforth, let 
$$E_{\leq k} := \{|G(X \cap [0, w])| \le k\}$$
denote the event that the number of edges in the Gilbert graph on $X\cap [0,w]$ is at most $k \ge 0$. For fixed $k$ and large $w$, the probability
$$p_{\leq k} := \P(E_{\leq k} )$$
becomes small, and we discuss how to leverage both the natural ordering on the real half-line and the independence property of the Poisson point process to derive a refined estimator. \\

We focus only on the cases $k = 0, 1$. In principle, the methods could be extended to cover estimation of probabilities of the form $p_{\leq k}$ for $k \ge 2$. However, for large values of $k$ the combinatorial analysis becomes quickly highly involved; see Remark~\ref{rem:Kgeq2} for more details.

%
%
\subsubsection{No edges}
\label{edge0Sec}

To begin with, let $k = 0$. That is, we analyze the probability that all vertices in $X\cap [0, w]$ are isolated in the sense that their vertex degree is 0. The key idea for approaching this probability is to note that $E_0 := E_{\le 0}$ occurs if and only if $X\cap [X_1, (X_1 +1)\wedge w] = \es$ and the Gilbert graph restricted to $X\cap [X_1 + 1, w]$ does not contain edges; see Figure \ref{isol_fig}. Here, we adhere to the convention that $[a, b] = \es$ if $a > b$.

%
%
\begin{center}
\begin{figure}[!h]
	\begin{tikzpicture}[scale=2]

	\draw (0, 0)--(5, 0);
	\draw[very thick] (0, -.05)--(0,.05);
	\draw[very thick] (5, -.05)--(5,.05);

	\draw (.84, -1)--(5, -1);
	\draw[very thick] (.84, -1.05)--(.84,-.95);
	\draw[very thick] (5, -1.05)--(5, -.95);
	\draw[very thick] (1.54, -1.05)--(1.54,-.95);
	\draw[blue, ultra thick] (.84, -1)--(1.54, -1);

	\draw[red, ultra thick] (1.54, -1)--(5, -1);

\fill (.84, 0) circle (.9pt);
\fill (1.80, 0) circle (.9pt);
\fill (2.84, 0) circle (.9pt);
\fill (4.17, 0) circle (.9pt);

\fill (1.80, -1) circle (.9pt);
\fill (2.84, -1) circle (.9pt);
\fill (4.17, -1) circle (.9pt);

\coordinate[label={0}] (A) at (0, -0.35);
\coordinate[label={$X_1$}] (A) at (.84, -0.35);
\coordinate[label={$w$}] (A) at (5, -0.35);

\coordinate[label={$X_1$}] (A) at (.84, -1.35);
\coordinate[label={$X_1 + 1$}] (A) at (1.54, -1.35);
\coordinate[label={$w$}] (A) at (5, -1.35);

\end{tikzpicture}
	\caption{ For $G(X\cap [0, w]) = \es$, the blue interval may not contain points of $X$ and the Gilbert graph restricted to the red interval may not contain edges.}
	\label{isol_fig}
\end{figure}
\end{center}

According to the Palm theory for one-dimensional Poisson point processes, the process 
$$X^{(1)} := (X - X_1) \cap [1, \ff)$$
again forms a homogeneous Poisson point process, which is independent of $X_1$; see \cite[Theorem 7.2]{poisBook}. In particular, writing 
$$\FF_1 := \s(X_1, X^{(1)})$$ 
for the $\s$-algebra generated by $X_1$ and $X^{(1)}$ allows for a partial computation of $p_0 := p_{\le0}$ via conditional MC \cite[Chapter 9.4]{handbook}. More precisely, when computing $\P(E_0\,|\,\FF_1)$  we explicitly throw away the information from the configuration of $X$ inside the interval $[X_1, X_1 + 1]$.

\begin{theorem}[No edges]
	\label{rao0Thm}
	Suppose that $w \ge 1$. Then,
	\begin{align}
		\label{iso0Eq}
		\P(E_0\,|\,\FF_1) = e^{-\la((w - X_1)_+ \wedge 1)} \one\{G(X^{(1)} \cap [1, w - X_1]) = \es\} \quad \text{almost surely}.
	\end{align}
\end{theorem}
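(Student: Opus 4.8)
The plan is to condition on the $\sigma$-algebra $\FF_1 = \s(X_1, X^{(1)})$ and compute the probability of $E_0$ by the tower property, explicitly integrating out the configuration of $X$ inside the window $[X_1, X_1+1]$, which is precisely the information discarded in $\FF_1$. First I would recall the characterization noted just before the statement: the event $E_0 = E_{\le 0}$ occurs if and only if (i) there are no points of $X$ in the interval $[X_1, (X_1+1)\wedge w]$, so that the first point $X_1$ is isolated, and (ii) the Gilbert graph on the remaining points $X \cap [X_1+1, w]$ has no edges. The key observation is that event (ii) is measurable with respect to $\FF_1$: indeed, the points of $X$ in $[X_1+1, w]$ are exactly the points of $X^{(1)} = (X - X_1)\cap[1,\ff)$ shifted by $X_1$ that land below $w$, i.e., $X^{(1)}\cap[1, w - X_1]$, and whether this configuration induces any edges depends only on $X^{(1)}$ and $X_1$. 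So the indicator $\one\{G(X^{(1)}\cap[1, w-X_1]) = \es\}$ can be pulled out of the conditional expectation.

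The remaining task is to compute $\P(\text{no points of } X \text{ in } [X_1, (X_1+1)\wedge w] \mid \FF_1)$. Here I would invoke the Palm theory for the one-dimensional Poisson process, as cited: conditionally on $X_1$, the restriction $X^{(1)} = (X-X_1)\cap[1,\ff)$ is a homogeneous Poisson process on $[1,\ff)$ independent of $X_1$, and crucially, the configuration of $X$ strictly between $X_1$ and $X_1 + 1$ — that is, $(X - X_1)\cap(0,1)$ — is, conditionally on $X_1$, an independent homogeneous Poisson process on $(0,1)$, independent of both $X_1$ and $X^{(1)}$ (this is the gap-independence/memorylessness structure of the Poisson process on the line; equivalently it follows from the restriction theorem together with the Palm description). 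Consequently, given $\FF_1$, the event that $X$ has no points in $[X_1, (X_1+1)\wedge w]$ has probability equal to the void probability of a Poisson process on an interval of length $(w - X_1)_+ \wedge 1$ (the length of $[X_1, (X_1+1)\wedge w]$ when $X_1 \le w$, and the empty set otherwise, using the stated convention), namely $e^{-\la\,((w-X_1)_+\wedge 1)}$. Multiplying the two factors gives exactly \eqref{iso0Eq}; the case $w \ge 1$ in the hypothesis simply guarantees $X_1 \le w$ is the typical situation and that the formula is non-degenerate, though the $(\cdot)_+$ handles $X_1 > w$ as well.

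I expect the main obstacle to be the careful justification that, conditionally on $\FF_1$, the point configuration inside $(X_1, X_1+1)$ is Poisson with the right intensity and independent of everything in $\FF_1$ — in other words, pinning down exactly what "Palm theory for one-dimensional Poisson processes" delivers here and why discarding that slab leaves a clean void probability rather than something entangled with the location of $X_1$ or with $X^{(1)}$. One clean way to handle this is to note that $X_1$ is an exponential$(\la)$ random variable, and then to use the strong Markov property of the Poisson process at the stopping location $X_1$: the post-$X_1$ process $X - X_1$ restricted to $(0,\ff)$ is again a homogeneous Poisson process independent of $X_1$, and splitting it into its parts on $(0,1)$ and on $[1,\ff)$ gives two independent Poisson processes by the restriction/independence property; the part on $[1,\ff)$ is $X^{(1)}$, and the part on $(0,1)$ is precisely what we integrate out. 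A secondary, more bookkeeping-level point is to verify the boundary conventions — that $[X_1,(X_1+1)\wedge w]$ and $[X_1+1, w]$ behave correctly when $X_1$ is close to or exceeds $w$, using the stated convention $[a,b] = \es$ when $a > b$ — but this is routine once the probabilistic core is in place.
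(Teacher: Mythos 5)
Your proposal is correct and follows essentially the same route as the paper's proof: decompose $E_0$ into the isolation of $X_1$ (no points in $[X_1,(X_1+1)\wedge w]$) and the edge-free condition on $X\cap[X_1+1,w]$, observe that the latter is $\FF_1$-measurable and factor it out, then evaluate the remaining conditional probability as the Poisson void probability $e^{-\la((w-X_1)_+\wedge 1)}$ of the discarded slab. Your additional care in justifying, via the Palm/strong Markov structure, that the configuration on $(X_1,X_1+1)$ is an independent Poisson process given $\FF_1$ makes explicit a step the paper delegates to its citation of Palm theory.
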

\begin{proof}
	First, $X_1$ is isolated if there are no further vertices in the interval $[X_1, (X_1 + 1) \wedge w]$. Moreover, after conditioning on $X_1$, the remaining vertices form a Poisson point process in $[(X_1 + 1) \wedge w, w]$; see \cite[Theorem 7.2]{poisBook}. Therefore, 
	\begin{align*}
		\P(E_0\,|\,\FF_1) &= \P\big(X \cap (X_1, (X_1 + 1) \wedge w) = \es\,|\, \FF_1\big)\one\{G(X^{(1)}\cap [1, w - X_1]) = \es\}\\
		&= e^{-\la(((X_1 + 1) \wedge w) - X_1)_+} 	\one\{G(X^{(1)} \cap [1, w - X_1]) = \es\}
	\end{align*}
	as asserted.\qed
\end{proof}

In other words, invoking the Rao--Blackwell theorem \cite{b2}, Theorem \ref{rao0Thm} showcases the right-hand side of identity \eqref{iso0Eq} as an attractive candidate for estimating $p_0$ via conditional Monte Carlo. The Rao--Blackwell theorem is a powerful tool in situations where $p_0$ is not available in closed form. Note that elementary properties of the conditional expectation imply that the conditional MC estimator $\P(E_0\,|\,\FF_1)$ is unbiased and exhibits smaller variance than the crude MC estimator $\one\{E_0 \}$. \\

Moreover, the right-hand side of identity \eqref{iso0Eq} features another indicator of an isolation event. Hence, it becomes highly attractive to refine the estimator further by proceeding iteratively. To make this precise, we define an increasing sequence $X_1^* \le X_2^* \le \cdots$ of points of $X$ recursively as follows. First, $X_1^* = X_1$ denotes the left-most point of $X$. Next, once $X_m^*$ is available,
$$X_{m + 1}^* := \inf\{X_i \in X:\, X_i \ge X_m^* + 1\}$$
denotes the first point of $X$ to the right of $X_m^* + 1$. Then, the event $E_0$ occurs if none of the intervals $[X_i^*, X_i^* + 1]$ contains points from $X$; see Figure \ref{isol_it_fig}.
%
%
\begin{figure}[!h]
	\begin{tikzpicture}[scale=2]

	\draw (0, 0)--(5, 0);
	\draw[very thick] (0, -.05)--(0,.05);
	\draw[very thick] (5, -.05)--(5,.05);

	\draw (.0, -1)--(5, -1);
	\draw[very thick] (.00, -1.05)--(.00,-.95);
	\draw[very thick] (.84, -1.05)--(.84,-.95);
	\draw[very thick] (5, -1.05)--(5, -.95);

	\draw[blue, ultra thick] (.84, -1)--(1.54, -1);
	\draw[very thick] (1.54, -1.05)--(1.54,-.95);

	\draw[blue, ultra thick] (1.8, -1)--(2.5, -1);
	\draw[very thick] (2.5, -1.05)--(2.5,-.95);

	\draw[blue, ultra thick] (2.84, -1)--(3.54, -1);
	\draw[very thick] (3.54, -1.05)--(3.54,-.95);

	\draw[blue, ultra thick] (4.17, -1)--(4.87, -1);
	\draw[very thick] (4.87, -1.05)--(4.87,-.95);

\fill (.84, 0) circle (.9pt);
\fill (1.80, 0) circle (.9pt);
\fill (2.84, 0) circle (.9pt);
\fill (4.17, 0) circle (.9pt);

\fill (1.80, -1) circle (.9pt);
\fill (2.84, -1) circle (.9pt);
\fill (4.17, -1) circle (.9pt);

\coordinate[label={0}] (A) at (0, -0.35);
\coordinate[label={$X_1$}] (A) at (.84, -0.35);
\coordinate[label={$w$}] (A) at (5, -0.35);

\coordinate[label={0}] (A) at (0, -1.35);
\coordinate[label={$X_1^* = X_1$}] (A) at (1.04, -1.35);
\coordinate[label={$X_2^*$}] (A) at (1.84, -1.35);
\coordinate[label={$X_3^*$}] (A) at (2.84, -1.35);
\coordinate[label={$X_4^* = X_*$}] (A) at (4.37, -1.35);
\coordinate[label={$w$}] (A) at (5, -1.35);

\end{tikzpicture}
	\caption{
		For $G(X\cap [0, w]) = \es$, the blue intervals may not contain any points.}
	\label{isol_it_fig}
\end{figure}
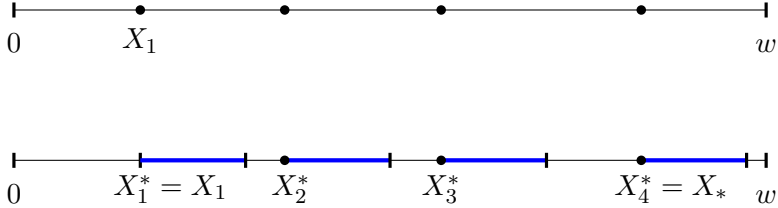

Of particular interest is the last index
$$I_* := \sup\{i \ge 1:\, X_i^* \le w\},$$
where $X_i^*$ remains inside $[0, w]$, together with the associated point 
$$X_* := X_{I_*}^*.$$
If $X_1 > w$, we set $I_* = 0$ and $X_* = w$. Let 
$$\FF^* := \s\big(X_1^*, X_2^*, \dots \big),$$ 
be the $\s$-algebra generated by $\{X_i^* : i \geq 1\}$.
%
%
\begin{theorem}[No edges -- iterated]
	\label{raoCor}
	Suppose that $w \ge 1$. Then, 
	$$
		\P(E_0 \,|\FF^*) = e^{-\la((I_* - 1)_+ + (w - X_*) \wedge 1)} \quad \text{almost surely},
		$$
\end{theorem}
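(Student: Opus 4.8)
The plan is to iterate Theorem~\ref{rao0Thm}. I would prove, by induction on $\lfloor w\rfloor$, the slightly more general statement that the claimed identity holds for \emph{every} deterministic $w\ge 0$; Theorem~\ref{raoCor} is then the case $w\ge 1$, and the range $0\le w<1$ is precisely what anchors the recursion. The induction is well-founded because $X_m^*\ge X_1+(m-1)$ forces $I_*\le\lfloor w\rfloor+1$.

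\emph{Anchor ($0\le w<1$).} On the $\FF^*$-measurable event $\{X_1>w\}$ one has $X\cap[0,w]=\es$, so $E_0$ is certain, $I_*=0$, $X_*=w$, and both sides equal $1$. On $\{X_1\le w\}$ one has $I_*=1$ and $X_*=X_1$ (since $X_2^*\ge X_1+1\ge 1>w$), while $w<1$ makes any two points of $X\cap[0,w]$ adjacent; hence $E_0=\{X\cap(X_1,w]=\es\}$. Since $\FF^*$ only records $X_1$ and the positions of later starred points --- functionals of $X$ on $[X_1+1,\ff)$, disjoint from $(X_1,w]$ --- Palm theory (\cite[Theorem~7.2]{poisBook}) together with the independence of a Poisson process over disjoint sets shows that $X\cap(X_1,w]$ is, conditionally on $\FF^*$, a homogeneous Poisson process of intensity $\la$ on an interval of length $w-X_1$. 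Thus $\P(E_0\mid\FF^*)=e^{-\la(w-X_1)}=e^{-\la((w-X_*)\wedge 1)}$.

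\emph{Inductive step ($w\ge 1$).} One first checks that $\FF^*\subseteq\FF_1=\s(X_1,X^{(1)})$, each $X_m^*$ being a deterministic function of $X_1$ and $X^{(1)}$, so the tower property gives $\P(E_0\mid\FF^*)=\E\big[\P(E_0\mid\FF_1)\mid\FF^*\big]$, into which I substitute \eqref{iso0Eq}. On $\{X_1>w\}$ the claim again reads $1=1$; on $\{w-1<X_1\le w\}$ the interval $[1,w-X_1]$ is empty, so the indicator in \eqref{iso0Eq} equals $1$, $\P(E_0\mid\FF_1)=e^{-\la(w-X_1)}$ is already $\FF^*$-measurable, and the identity follows from $I_*=1$, $X_*=X_1$. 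The substantive case is $\{X_1\le w-1\}$, where $(w-X_1)_+\wedge 1=1$ and hence $\P(E_0\mid\FF^*)=e^{-\la}\,\P\big(G(X^{(1)}\cap[1,w-X_1])=\es\mid\FF^*\big)$. Here I set $\tilde X:=X^{(1)}-1$, a homogeneous Poisson process on $[0,\ff)$ independent of $X_1$ by Palm theory, and $\tilde w:=w-X_1-1\ge 0$; the event inside the probability is exactly $E_0$ for $\tilde X$ with window $[0,\tilde w]$, the starred points of $\tilde X$ are $\tilde X_m^*=X_{m+1}^*-X_1-1$, so $\FF^*=\s(X_1)\vee\tilde\FF^*$ while $\tilde\FF^*$ and that event are independent of $X_1$. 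A standard conditioning argument then lets me ``freeze'' the random length $\tilde w$ into a parameter and apply the induction hypothesis (legitimate since $\lfloor\tilde w\rfloor\le\lfloor w\rfloor-1$), yielding $e^{-\la((\tilde I_*-1)_+ +(\tilde w-\tilde X_*)\wedge 1)}$ for the conditional probability. Finally I match exponents: on $\{X_1\le w-1\}$ one has $I_*\ge 1$, $\tilde I_*=I_*-1$, and $\tilde X_*=X_*-X_1-1$ when $I_*\ge 2$ (while $I_*=1$ forces $\tilde I_*=0$ and $\tilde X_*=\tilde w$); substituting gives $1+(\tilde I_*-1)_+ +(\tilde w-\tilde X_*)\wedge 1=(I_*-1)_+ +(w-X_*)\wedge 1$, which closes the induction.

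The conceptual part --- iterating the one-step formula --- is short; the work lies in the bookkeeping of the last case: verifying $\FF^*=\s(X_1)\vee\tilde\FF^*$ with $\tilde\FF^*$ independent of $X_1$, justifying the ``freezing'' of the random window $\tilde w$ so that the induction hypothesis (stated for deterministic windows) genuinely applies, and carrying the $(\cdot)_+$ and $\wedge 1$ truncations correctly through the shift, including the separate $I_*=1$ subcase. If one prefers to avoid the recursion, an alternative is available: show that, almost surely, $E_0$ is the event that none of the pairwise disjoint gap intervals $(X_m^*,X_m^*+1)$ for $m<I_*$ and $(X_*,(X_*+1)\wedge w]$ contains a point of $X$; argue via (iterated) Palm theory that $\FF^*$ contains no information about $X$ in the interiors of these intervals, so that conditionally on $\FF^*$ the corresponding Poisson counts are independent with means $\la$ and $\la\,(w-X_*)\wedge 1$; and multiply, recovering $e^{-\la((I_*-1)_+ +(w-X_*)\wedge 1)}$ (the $I_*=0$ case being immediate).
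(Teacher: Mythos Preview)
Your proof is correct, and it is indeed an iteration of Theorem~\ref{rao0Thm}, but the induction is organised differently from the paper's. The paper keeps $w$ fixed, introduces the decreasing filtration $\FF_m:=\s(X_1^*,\dots,X_m^*,X^{(m)})$ with $\FF_1\supseteq\FF_2\supseteq\cdots\supseteq\FF^*$, decomposes according to the value of $I_*$, and proves by induction on $m$ the identity~\eqref{eqn:condAndI} for $\P(E_0\mid\FF_m)\one\{I_*\ge m\}$; one final tower step down to $\FF^*$ then finishes. You instead induct on $\lfloor w\rfloor$: at each step you use a single tower from $\FF_1$ to $\FF^*$, substitute \eqref{iso0Eq}, shift the process to $\tilde X=X^{(1)}-1$, and apply the hypothesis to the strictly shorter window $\tilde w=w-X_1-1$. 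The trade-off is that the paper must manage a whole family of intermediate $\s$-algebras, while you must justify applying the (deterministic-$w$) hypothesis at the random length $\tilde w$ via the freezing/disintegration argument, including the uniformity-in-$w$ issue for the ``a.s.'' exceptional sets. Your closing remark about the direct void-probability argument over the disjoint gap intervals $(X_m^*,X_m^*+1)$ is also a valid third route and is perhaps the most transparent of the three.
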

\begin{proof}
	To prove the claim, we first define the shifted process
	$$X^{(m)} := (X - X_m^*)\cap [1, \ff)$$
	and write 
$$\FF_m := \s\big(X_1^*,\dots, X_m^*, X^{(m)}\big)$$ 
for the $\s$-algebra generated by $X_1^*, \dots, X_m^*$ and $X^{(m)}$. 
Observe that $\FF_1 \supseteq \FF_2 \supseteq \cdots \supseteq \FF^*$. 
In particular, by the tower property of conditional expectation,
	\begin{align*}
		\P\big(E_0 \,|\,\FF^*\big) &= \sum_{m \ge 0}\E\big[\one\{I_* = m\} \one\{E_0\} \,|\,\FF^*\big] = \sum_{m \ge 0}\one\{I_* = m\}\P(E_0 \,|\,\FF^*)\\
		                     &= \sum_{m \ge 0}\one\{I_* = m\}\E\big[\P(E_0 \,|\, \FF_m)\,|\,\FF^*\big]\one\{I_* \geq m\}.
	\end{align*}
	Hence, it suffices to show that for every $m \ge 0$, 
	\begin{equation}
	\label{eqn:condAndI}
	\P(E_0 \,|\, \FF_m)\one\{I_* \ge m\} = e^{-\la((m - 1)_+ + (w - X_m^*) \wedge 1)}\one\{G(X^{(m)}\cap [1, w - X_m^*]) = \es\}\one\{I_* \ge m\},
	\end{equation}
	because $X^{(I_*)} \cap [1, w - X^*] = \es$.
	To achieve this goal, we proceed by induction on $m$. For $m = 0$ and $m = 1$ we are in the setting of Theorem \ref{rao0Thm}. To pass from $m$ to $m + 1$, the induction hypothesis yields that
	\begin{align}
		\P(E_0 \,|\,\FF_{m + 1}) \one\{I_* \ge m + 1\}&= \E\big[\P(E_0 \,|\,\FF_{m})\,|\,\FF_{m + 1}\big]\one\{I_* \ge m + 1\}\nonumber\\
&= e^{-\la m} \P\big(G(X^{(m)}\cap [1, w - X_m^*]) = \es\,|\,\FF_{m + 1}\big)\one\{I_* \ge m + 1\}. \label{eqn:first}
	\end{align}
	Since $X_{m + 1}^*$ is the first point of $X$ after $X_m^* + 1$, by applying Theorem \ref{rao0Thm} to the Poisson point process $X^{(m)}$, we obtain
	\begin{align}
		\P(G(X^{(m)} \cap [1, w - X_m^*])\hspace{-.1cm} = \hspace{-.1cm}\es|\FF_{m + 1})\hspace{-.1cm} = \hspace{-.1cm}e^{-\la((w - X_{m + 1}^*) \wedge 1)} \one\{G(X^{(m + 1)}\cap [1, w - X_{m + 1}^*]) \hspace{-.1cm}= \hspace{-.1cm}\es\}. \label{eqn:second}
	\end{align}
	Combining \eqref{eqn:first} and \eqref{eqn:second} yields the assertion.\qed
\end{proof}
%
%
The conditional MC estimator from Theorem \ref{raoCor} leads to Algorithm \ref{rao0Alg}. Here, $\ms{Exp}(\la)$ is an exponential random variable with parameter $\la$ that is independent of everything else.
\begin{algorithm}[htb]
\SetAlgoSkip{}
\DontPrintSemicolon
\SetKwInput{Input}{input}\SetKwInOut{Output}{output}
\Input{Number $N \ge 1$ of MC runs and Poisson intensity $\la > 0$.}
\Output{Conditional MC estimator of $p_0$.}

	$\ms{sum} \lw 0$
	
\For{$i\le N$}
	{
	Draw $Z$ from $\ms{Exp}(\la)$\\
	$p \lw 1$

	\While{$Z \leq w$}
	{
		$p \lw p e^{-\la ((w - Z) \wedge 1)}$\\
	Draw $Y$ from $\ms{Exp}(\la)$\\
	$Z \lw Z + 1 + Y$
	}

$\ms{sum} \lw \ms{sum} + p$
}

	\KwRet{$\ms{sum}/N$}
\caption{Conditional MC estimator for $p_0$}
	\label{rao0Alg}
\end{algorithm}
%
%
\subsubsection{At most one edge} 
\label{edge1Sec}
Here, let $k = 1$; i.e., we propose an estimator for the probability $p_1 := p_{\leq1}$ that the Gilbert graph on $X\cap [0, w]$ has at most one edge.
Let 
$$I^+ := \inf\{i \ge 2:\, X_i - X_{i - 1} \le 1\}$$
be the index of the first point of $X$ whose predecessor is at distance at most~1. Putting $X^+ := (X - X_{I^+}) \cap [1, \ff)$, Figure \ref{one_fig} illustrates that the event $E_{\leq 1} $ is equal to the intersection of the events $\{X_{I^+ + 1} \ge X_{I^+} + 1\}$ and $\{G(X^+\cap [1, w - X_{I^+}]) = \es\}$.
%
%
\begin{figure}[!h]
	\begin{tikzpicture}[scale=2]

	\draw (0, -1)--(5, -1);
	\draw[very thick] (0, -1.05)--(0,-.95);
	\draw[very thick] (5, -1.05)--(5,-.95);

	\draw (3.17, -1)--(5, -1);
	\draw[very thick] (5, -1.05)--(5, -.95);
	\draw[very thick] (3.87, -1.05)--(3.87,-.95);
	\draw[blue, ultra thick] (3.17, -1)--(3.87, -1);

	\draw[red, ultra thick] (3.87, -1)--(5, -1);

\fill (.84, -1) circle (.9pt);
\fill (1.80, -1) circle (.9pt);
\fill (2.84, -1) circle (.9pt);
\fill (3.17, -1) circle (.9pt);
\fill (4.17, -1) circle (.9pt);

\fill (3.17, -1) circle (.9pt);
\fill (4.17, -1) circle (.9pt);

\coordinate[label={0}] (A) at (0, -1.35);
\coordinate[label={$X_1$}] (A) at (.84, -1.35);

	\coordinate[label={$X_{I^+} $}] (A) at (3.17, -1.35);
	\coordinate[label={$X_{I^+} + 1$}] (A) at (3.87, -1.35);
\coordinate[label={$w$}] (A) at (5, -1.35);

\end{tikzpicture}
	\caption{ For $|G(X\cap [0, w])| \le 1$, the blue interval may not contain any points and the Gilbert graph restricted to the red interval may not contain edges.}
	\label{one_fig}
\end{figure}

Moreover, we write 
$$\FF^+ := \s(X_1, X_2, \dots, X_{I^+}, X^+)$$
for the $\s$-algebra generated by $X_1, X_2, \dots, X_{I^+}$ and $X^+$. 

\begin{theorem}[At most one edge]
	\label{rao1Thm}
	Suppose that $w \ge 1$. Then, 
	\begin{align}
		\label{iso1Eq}
		\P(E_{\leq 1} \,|\,\FF^+) = e^{-\la((w - X_{I^+})_+ \wedge 1)} \one\{G(X^+\cap [1, w - X_{I^+}]) = \es\}, \quad \text{almost surely}.
	\end{align}
\end{theorem}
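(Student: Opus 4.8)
The proof will follow the same template as Theorem~\ref{rao0Thm}, with the deterministic left-most point $X_1$ replaced by the random point $X_{I^+}$ at which the first short gap occurs. The first step is to establish, up to a null set, the event identity
\[ E_{\leq 1} = \big\{X\cap(X_{I^+},(X_{I^+}+1)\wedge w)=\es\big\}\cap\big\{G(X^+\cap[1,w-X_{I^+}])=\es\big\}. \]
The key point is that $I^+$ marks the position where the first edge can possibly appear: by definition every gap $X_i-X_{i-1}$ with $2\le i<I^+$ exceeds $1$, so, summing gaps, no two of $X_1,\dots,X_{I^+-1}$ lie within distance $1$ and $G$ restricted to $X\cap[0,X_{I^+})$ is empty. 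If $X_{I^+}\le w$, the pair $(X_{I^+-1},X_{I^+})$ is then exactly one edge, and $|G(X\cap[0,w])|\le1$ holds iff there is no further edge; this happens iff the middle interval $(X_{I^+},(X_{I^+}+1)\wedge w)$ is empty (a point there would create a second edge incident to $X_{I^+}$) and $X\cap[X_{I^+}+1,w]$ — a shift of $X^+\cap[1,w-X_{I^+}]$ — carries no internal edge. When $X_{I^+}>w$ (in particular when $X_1>w$) both right-hand events hold trivially and $E_{\le1}$, indeed $E_0$, occurs; here one uses that almost surely $I^+<\ff$.

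Second, conditioning on $\FF^+$ factors out the second indicator, since $X^+$ and $X_{I^+}$ are $\FF^+$-measurable:
\[ \P(E_{\leq 1}\mid\FF^+)=\P\big(X\cap(X_{I^+},(X_{I^+}+1)\wedge w)=\es\mid\FF^+\big)\,\one\{G(X^+\cap[1,w-X_{I^+}])=\es\}. \]
To evaluate the remaining conditional void probability I will argue that, conditionally on $\FF^+$, the point set $X\cap(X_{I^+},X_{I^+}+1)$ is a homogeneous Poisson process of intensity $\la$. This rests on two facts: (i) $X_{I^+}$ is a stopping time for the left-to-right filtration of $X$, so by the strong Markov property of the Poisson process on $[0,\ff)$ — the dynamic counterpart of the Palm identity \cite[Theorem 7.2]{poisBook} — the shifted process $(X-X_{I^+})\cap(0,\ff)$ is a homogeneous Poisson process of intensity $\la$ independent of $\s(X_1,\dots,X_{I^+})$; and (ii) by independence of increments of this Poisson process, its restriction to $(0,1)$ is independent of its restriction to $[1,\ff)$ — namely $X^+$. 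Together these yield independence of $X\cap(X_{I^+},X_{I^+}+1)$ from all of $\FF^+$, so the first factor equals the void probability $e^{-\la(((X_{I^+}+1)\wedge w)-X_{I^+})_+}=e^{-\la((w-X_{I^+})_+\wedge1)}$, which is exactly \eqref{iso1Eq}.

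I expect the main obstacle to be making (i)--(ii) rigorous: one must verify that $X_{I^+}$ is genuinely a stopping time (it is, because $\{X_{I^+}\le t\}=\{\exists\,i\ge2:\ X_i\le t,\ X_i-X_{i-1}\le1\}$ is determined by $X\cap[0,t]$) and then combine the strong Markov property with independence of increments so as to obtain independence of the middle interval from the \emph{whole} of $\FF^+$, not merely from the past $\s(X_1,\dots,X_{I^+})$. A cleaner alternative, parallel to the proof of Theorem~\ref{raoCor}, is to decompose $\one\{E_{\le1}\}=\sum_{n\ge2}\one\{I^+=n\}\,\one\{E_{\le1}\}$ and apply \cite[Theorem 7.2]{poisBook} to the $n$-th point of $X$ for each fixed $n$; in either route the degenerate cases $X_{I^+}>w$ and $X_1>w$ should be recorded explicitly.
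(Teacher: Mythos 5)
Your proposal is correct and follows essentially the same route as the paper: the paper's proof likewise rests on the event identity $E_{\le 1}=\{X\cap(X_{I^+},(X_{I^+}+1)\wedge w]=\es\}\cap\{G(X^+\cap[1,w-X_{I^+}])=\es\}$ (stated there via Figure~\ref{one_fig}), handles $X_{I^+}>w$ separately, and then invokes Palm theory for the one-dimensional Poisson process to evaluate the conditional void probability as $e^{-\la(((X_{I^+}+1)\wedge w)-X_{I^+})_+}$. The paper keeps this very terse, whereas you spell out the two points it leaves implicit — why no edge can occur before index $I^+$ and why the post-$X_{I^+}$ configuration on $(0,1)$ is independent of all of $\FF^+$ rather than merely of the past — which is a welcome but not structurally different elaboration.
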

\begin{proof}
	Since the proof is very similar to that of Theorem \ref{rao0Thm}, we only point to the most important ideas. Equation \eqref{iso1Eq} obviously holds for $X_{I^+} > w$. For the case $X_{I^+} \le w$, relying again on the Palm theory of the one-dimensional Poisson process, we have
	\begin{align*}
		\P(E_{\leq 1} \,|\,\FF^+) &= \P\big(X \cap (X_{I^+ }, (X_{I^+ } + 1) \wedge w] = \es\,|\, \FF^+\big)\one\big\{G(X^+ \cap [1, w - X_{I^+}]) = \es\big\}\\
		&= e^{-\la\big(((X_{I^+} + 1) \wedge w) - X_{I^+}\big)_+} \one\big\{G(X^+ \cap [1, w - X_{I^+}]) = \es \big\},
	\end{align*}
	as asserted.\qed
\end{proof}

Similarly to Section \ref{edge1Sec}, Theorem \ref{rao1Thm} yields a conditional MC estimator, which we describe in Algorithm \ref{rao1Alg}. 

\SetKwRepeat{Repeat}{repeat}{until}
\begin{algorithm}[htb]
\SetAlgoSkip{}
\DontPrintSemicolon
\SetKwInput{Input}{input}\SetKwInOut{Output}{output}
\Input{Number $N \ge 1$ of MC runs and Poisson intensity $\la > 0$.}
\Output{Conditional MC estimator of $p_1$.}

	$\ms{sum} \lw 0$
	
\For{$i\le N$}
	{
	Draw $Z$ from $\ms{Exp}(\la)$\\
	$ p \lw 1$\\
	\Repeat{$Y \le 1$}
	{
	Draw $Y$ from $\ms{Exp}(\la)$\\
	$Z \lw Z + Y$
	}
	\While{$Z \leq w$}
	{
		$ p \lw  p e^{-\la ((w - Z) \wedge 1)}$\\
	Draw $Y$ from $\ms{Exp}(\la)$\\
	$Z \lw Z + 1 + Y$
	}

$\ms{sum} \lw \ms{sum} +  p$
}

	\KwRet{$\ms{sum}/N$}
\caption{Conditional MC estimator for $p_1$}
	\label{rao1Alg}
\end{algorithm}

\begin{remark}[Symmetric window]
\label{rem:sym}
	The methods described above could also be applied for a Poisson point process in a symmetric interval of the form $[-w/2, w/2]$. Then, in addition to $I_*$ and $I^+$, we would need to take into account the corresponding quantities located to the left of the origin.
\end{remark}

\begin{remark}[$k \ge 2$]
\label{rem:Kgeq2}
The method to handle $k=0,1$ could certainly be extended to larger $k \ge 2$, but the configurational analysis would quickly become very involved. For instance, for $k = 2$ we would first need to require that the interval $[X_{I_* - 1}, X_{I_* - 1} + 1]$ contains only the point $X_{I_*}$. Next, if $X_{I_* + 1} \le X_{I_*} + 1$, then there may be no more edges to the right of $X_{I_* + 1}$. On the other hand, the analysis will be different if $X_{I_* + 1} > X_{I_*} + 1$, because then there can still be one edge in the graph to the right of $X_{I_* + 1}$.
\end{remark}

	%
	%
	\subsection{Simulations}
	\label{smallSimSec}
	In this section, we illustrate how to estimate the rare-event probabilities $p_0$ and $p_1$ via MC. After presenting the crude MC estimator, we illustrate how the conditional MC estimators described in Section \ref{edgeSec} improve the efficiency drastically.
In the simulation study, we estimate $p_0$ and $p_1$ for sampling windows of size $w \in \{5, 7.5, 10\}$ and Poisson intensity $\la = 2$. Both the crude MC and the conditional MC estimator are computed based on $N = 10^6$ samples.
	%
	%
	To estimate the rare-event probabilities $p_{\leq k}$ using crude MC, we draw iid samples $X^{(1)}, \dots, X^{(N)}$ of the Poisson point process on $[0, w]$ and set
$$ \pcmc_{\leq k} := \frac 1N \sum_{i \le N} \one\{|G(X^{((i)}\cap [0, w])| \le k\big\}$$
for the proportion of samples leading to a Gilbert graph with at most $k$ edges.\\

The estimates reported in Table \ref{cmcSmallTab} reveal that as the size of the sampling window grows, the rare-event probabilities decrease rapidly and that the estimators exhibit a high relative error.

%
%
     \begin{table}[!htpb]
	     \centering
		 \caption{Crude MC estimates for $p_0 = p_{\leq 0}$ and $p_{\leq 1}$ for sampling intervals of size $w \in \{5, 7.5, 10\}$ and Poisson intensity $\la = 2$. Standard deviations reported as $\pm$.}
     \label{cmcSmallTab}

 \begin{tabular}{lccr}
    $w$ & $\pcmc_{0}\, $ & $\pcmc_{\leq 1}\, $  \\
\hline
     $5$ & $4.056 \times 10^{-3} \pm 6.36 \times 10^{-5}$ & $1.676 \times 10^{-2} \pm 1.28 \times 10^{-4}$\\
     $7.5$ & $2.410 \times 10^{-4} \pm 1.55 \times 10^{-5}$ & $1.354 \times 10^{-4} \pm 3.68 \times 10^{-5}$ \\
     $10$ & $1.100 \times 10^{-5} \pm 3.32 \times 10^{-6}$ & $8.500 \times 10^{-5} \pm 9.23 \times 10^{-6}$\\ \ \\
\end{tabular}
     \end{table}
	%
	%
	Next, we estimate $p_{\leq k}$ for $k=0, 1$ with the conditional MC methods described in Algorithms~\ref{rao0Alg} and \ref{rao1Alg}. If $P^{(1)}, \dots, P^{(N)}$ denote the simulation outputs, then we set
$$ \pc_{\leq k} := \frac 1N \sum_{i \le N} P^{(i)}.$$

The corresponding estimates shown in Table \ref{raoSmallTab} highlight that the theoretical improvements over crude MC predicted from Theorems \ref{raoCor} and \ref{rao1Thm} also manifest themselves in the simulation study. This is particularly striking in the setting $k = 0$, where the variance can be reduced by several orders of magnitude.

%
%
     \begin{table}[!htpb]
	     \centering
	 \caption{Conditional MC estimates for $p_0 = p_{\leq 0}$ and $p_{\leq 1}$ for sampling intervals of size $w \in \{5, 7.5, 10\}$ and Poisson intensity $\la = 2$. Variance improvements in comparison to the crude estimator in parentheses.}
     \label{raoSmallTab}

 \begin{tabular}{lccr}
    $w$ & $\pc_0\,$ & $\pc_{\leq 1} \,$  \\
\hline
	 $5$ & $4.148 \times 10^{-3} \pm 1.41 \times 10^{-5}$ (20.30) & $1.670 \times 10^{-2} \pm 7.63 \times 10^{-4}$ (2.8)\\
	 $7.5$ & $2.244 \times 10^{-4} \pm 1.06 \times 10^{-6}$ (216.3) & $1.304 \times 10^{-4} \pm 1.87 \times 10^{-5}$ (3.9) \\
	 $10$ & $1.296 \times 10^{-5} \pm 1.06 \times 10^{-7}$ (984.9) & $9.637 \times 10^{-5} \pm 4.56 \times 10^{-6}$ (4.1)\\ \ \\
\end{tabular}
	     \end{table}

%
%
\subsection{Few missing edges}
\label{missEdgeSec}

We now focus on computing the rare-event probabilities of having few missing edges. More precisely, we write
$$M_w := \binom{|X\cap [0, w]|}2 - |G(X\cap [0, w])|$$
for the number of edges of $G(X\cap [0, w])$ that are missing from the complete graph on the vertices $X\cap [0, w]$. We write 
$$p'_{\leq k} := \P(M_w \le k)$$ 
for the probability that at most $k \ge 0$ edges are missing. Surprisingly, this seemingly more complicated task is more accessible than the probability of seeing few edges considered in Section \ref{edgeSec}, as both $p'_0 = p'_{\leq 0}$ and $p'_{\leq 1}$ are amenable to closed-form expressions.\\

For $p'_0$, the key insight is to note that $M_w = 0$ if and only if $X \cap [X_1 + 1, w] = \es$; see Figure~\ref{miss_fig}.
%
%
\begin{figure}[!h]
	\begin{tikzpicture}[scale=2]
	\draw (0, 0)--(5, 0);
	\draw[very thick] (0, -.05)--(0,.05);
	\draw[very thick] (5, -.05)--(5,.05);

	\draw[very thick] (5, -0.05)--(5, .05);
	\draw[very thick] (3.54, -0.05)--(3.54,.05);

	\draw[red, ultra thick] (3.54, -0)--(5, -0);

\fill (2.84, 0) circle (.9pt);
\fill (3.17, 0) circle (.9pt);
\fill (3.22, 0) circle (.9pt);

\coordinate[label={0}] (A) at (0, -0.35);
\coordinate[label={$X_1$}] (A) at (2.84, -0.35);
\coordinate[label={$w$}] (A) at (5, -0.35);

\coordinate[label={$X_1 + 1$}] (A) at (3.54, -0.35);

\end{tikzpicture}
	\caption{ For $M_w = 0$, the red interval may not contain any points.}
	\label{miss_fig}
\end{figure}
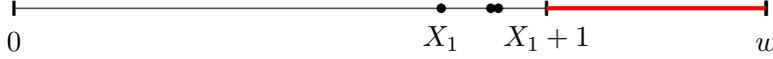

\begin{theorem}[No missing edges]
	\label{cliq0Thm}
	Suppose that $w \ge 1$. Then, 
	$$p'_0 = e^{- \la (w - 1)} + (w -1)\la e^{-\la(w - 1)}.$$
	
\end{theorem}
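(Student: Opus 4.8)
The plan is to compute $p'_0 = \P(M_w = 0)$ directly by conditioning on the number of points $N := |X \cap [0,w]|$, which is Poisson with parameter $\la w$, and on their ordered positions. The observation highlighted in the excerpt is the crux: $M_w = 0$ means the Gilbert graph on $X \cap [0,w]$ is complete, and since the points are ordered on the line, this happens precisely when the leftmost and rightmost points are within distance $1$, equivalently (as the picture suggests) when $X \cap (X_1 + 1, w] = \es$. I would first record this equivalence carefully, splitting on small cases: if $N = 0$ or $N = 1$ there are no missing edges trivially, and if $N \ge 2$ the event $\{M_w = 0\}$ equals $\{X_N - X_1 \le 1\}$, which given $X_1$ is the event that no points fall in $(X_1 + 1, w]$.

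The cleanest route is to integrate out over the position of $X_1$. By the Palm theory / elementary Poisson computations already invoked for Theorem~\ref{rao0Thm}, conditionally on $X_1 = x$ (with $x \in [0, w]$, whose ``density'' on the event that $X_1$ is well-defined is $\la e^{-\la x}$), the remaining points form a Poisson process on $(x, w]$, and $M_w = 0$ iff that process puts no mass on $(x+1, w]$, an event of probability $e^{-\la (w - x)_+ \wedge \text{stuff}}$ — more precisely $e^{-\la((w - x - 1)_+)}$ when $x \le w$. So
\begin{align*}
	p'_0 &= \P(X \cap [0,w] = \es) + \int_0^w \la e^{-\la x} \, e^{-\la (w - x - 1)_+}\d x\\
	     &= e^{-\la w} + \int_0^{w-1} \la e^{-\la x} e^{-\la(w - x - 1)}\d x + \int_{w-1}^w \la e^{-\la x}\d x,
\end{align*}
where the first term absorbs the $N = 0$ contribution and the split of the integral at $x = w - 1$ handles the positive part. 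The first integral is $\la e^{-\la(w-1)}\int_0^{w-1}\d x = (w-1)\la e^{-\la(w-1)}$, and the second integral is $e^{-\la(w-1)} - e^{-\la w}$; adding the $e^{-\la w}$ term these combine to exactly $e^{-\la(w-1)} + (w-1)\la e^{-\la(w-1)}$, as claimed.

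The main thing to be careful about — the only real obstacle, such as it is — is the bookkeeping at the boundary $x \in (w-1, w]$: there the interval $(x+1, w]$ is empty, so the conditional probability is $1$ rather than $e^{-\la(w-x-1)}$, and one must not forget that this region contributes. One should also double-check that the $N=1$ case is correctly subsumed: if the unique point lies in $[0,w]$ the graph is a single isolated vertex with $\binom{1}{2} = 0$ missing edges, and this is automatically included in the $x \in (w-1,w]$ part of the integral (and partly elsewhere) since ``no further points in $(x+1,w]$'' holds vacuously or not. An alternative, perhaps even shorter, derivation avoids integration entirely: condition on $N = n$, note $\P(M_w = 0 \mid N = n)$ is the probability that $n$ i.i.d.\ uniform points on $[0,w]$ have range $\le 1$, which for $n \ge 1$ equals $n(1/w)^{n-1} - (n-1)(1/w)^n$ for $w \ge 1$ (a standard order-statistics fact, with the convention that it is $1$ for $n \le 1$), and then sum $\sum_{n \ge 0} e^{-\la w}(\la w)^n/n! \cdot \P(M_w = 0\mid N=n)$; the two resulting series telescope to the stated closed form. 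I would present the integration approach as the main line since it parallels Theorem~\ref{rao0Thm}, and perhaps remark on the order-statistics identity as a sanity check.
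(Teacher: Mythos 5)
Your proposal is correct and follows essentially the same route as the paper: both reduce $\{M_w = 0\}$ to the void event $\{X \cap [X_1+1, w] = \es\}$, condition on $X_1$ with density $\la e^{-\la x}$, and integrate the Poisson void probability, with your explicit split at $x = w-1$ merely making the paper's implicit handling of the boundary term $e^{-\la(w-1)}$ more transparent. The order-statistics series you mention is a nice independent check but not needed.
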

\begin{proof}
	As observed in the above remark, we need to compute $\P(X\cap [X_1 + 1, w] = \es)$. Hence, invoking the void probability for a Poisson point process,
	\begin{align*}
		p'_0 = \int_0^\ff\la e^{-\la x_1} \P(X\cap [x_1 + 1, w] = \es)\, \d x_1
		= e^{- \la (w - 1)} + \int_0^{w - 1}\la e^{-\la x_1} e^{-\la(w - 1 - x_1)} \, \d x_1,
	\end{align*}
	which equals $
		 e^{- \la (w - 1)} + (w -1)\la e^{-\la(w - 1)}$, 
	as asserted.\qed
\end{proof}

Next, we compute the probability of observing at most one missing edge.

\begin{theorem}[At most one missing edge]
	\label{cliq1Thm}
	Suppose that $w \ge 2$. Then,
	$$p'_{\leq 1} = p'_0 + \frac{\la^2(w - 2)^2}2 e^{-\la w} + (w - 3/2) \la^2e^{-\la (nw- 1)}.$$
\end{theorem}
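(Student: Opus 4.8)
The plan is to characterize the event $\{M_w \le 1\}$ combinatorially in terms of the spacings of the ordered Poisson points, and then integrate the corresponding void probabilities, exactly as in the proof of Theorem~\ref{cliq0Thm}. The event $\{M_w = 0\}$ contributes the term $p'_0$ already computed, so the work is to add the contribution of $\{M_w = 1\}$, i.e.\ configurations in which exactly one pair of points fails to be adjacent. If $\{M_w = 1\}$ occurs, then the Gilbert graph on $X \cap [0,w]$ is the complete graph minus a single edge; in particular all points of $X \cap [0,w]$ lie within a window of diameter at most the threshold except for one ``outlier'' pair at distance in $(1, \cdot]$. Concretely, I expect the decomposition to be: there is an index $j$ such that $X_{j+1} - X_j \in (1,?]$ is the unique non-edge, which forces $X \cap [0,w] = \{X_1,\dots,X_{j+1}\}$ with $X_{j+1} - X_1 \le 1$ being violated only through the gap at position $j$. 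Since the non-edge is between a ``left block'' $\{X_1,\dots,X_j\}$ and a ``right block'' $\{X_{j+1},\dots\}$, and there is only one non-edge, each block must itself be a clique (diameter $\le 1$), the two blocks must be ``cross-complete'' except for the single pair $(X_j, X_{j+1})$, and there can be no point of $X$ in $[0,w]$ after the right block. The only way a single pair is missing across two cliques is that one of the blocks is a singleton: either the left block is $\{X_1\}$ and $X_2 - X_1 > 1$ while $X \cap [X_2 + 1, w] = \emptyset$ and $X_2, \dots$ form a clique, or symmetrically the right block is a singleton.

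First I would make the case distinction explicit and translate each case into an event about $X_1$, the relevant spacing, and a void region; there will be essentially two symmetric families (missing edge incident to the leftmost point vs.\ to the rightmost relevant point), plus care at the boundary of the window. Second, for each case I would write the probability as an iterated integral over the position $x_1$ of $X_1$ and the position(s) of the point(s) bounding the non-edge, using the exponential interarrival description of the one-dimensional Poisson process together with the void probability $\P(X \cap [a,b] = \emptyset) = e^{-\la(b-a)_+}$ for the remaining empty stretches. Third, I would carry out these elementary integrals; I anticipate one of them producing the Gaussian-looking $\tfrac{\la^2 (w-2)^2}{2} e^{-\la w}$ term (from integrating $1$ over a two-dimensional simplex of side $w-2$, weighted by $\la^2 e^{-\la w}$) and the other(s) producing the $(w - 3/2)\la^2 e^{-\la(w-1)}$ term after collecting like exponentials. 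Finally I would add $p'_0$ back in and simplify.

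The main obstacle is the combinatorial bookkeeping in the first step: correctly enumerating \emph{all} configurations with exactly one missing edge without double counting, and handling the window boundary (what happens when the ``outlier'' point is within distance $1$ of $w$, so that some void interval is truncated, exactly as the $(\cdot)\wedge 1$ and $(\cdot)_+$ truncations appear in Theorems~\ref{rao0Thm}--\ref{rao1Thm}). In particular one must check that requiring $w \ge 2$ is exactly what is needed for the generic (untruncated) integrals to be the whole story, and that no additional degenerate configurations (e.g.\ the left block and right block both of size $\ge 2$) can contribute. I would also double-check the isolated-looking exponent $e^{-\la(w-1)}$ versus the $e^{-\la(nw - 1)}$ that appears in the displayed statement --- the symbol $n$ has not been introduced in this one-dimensional section, so I read it as a typo for $e^{-\la(w-1)}$ and would prove the statement with that exponent.
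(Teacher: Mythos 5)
Your overall plan coincides with the paper's: peel off $p'_0$, characterize $\{M_w=1\}$ configurationally, and integrate Poisson void probabilities; and your reading of $e^{-\la(nw-1)}$ as a typo for $e^{-\la(w-1)}$ is correct. The genuine gap is in the combinatorial step, which you rightly identify as the main obstacle but then get wrong. In dimension one, when there is exactly one missing edge it is necessarily the pair formed by the two \emph{extreme} points $X_1$ and $X_{\max}$ of $X\cap[0,w]$, since that pair realizes the largest interpoint distance; it need not sit across a consecutive gap exceeding $1$. For instance $X\cap[0,w]=\{0,\,0.6,\,1.2\}$ has exactly one missing edge although every consecutive spacing is $0.6$. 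Conversely, if some consecutive gap $X_{j+1}-X_j$ exceeds $1$ with $j$ points to its left and $n-j$ to its right, then \emph{all} $j(n-j)$ straddling pairs are missing, so "exactly one missing edge realized by a consecutive gap" forces $j=1$ and $n=2$: both of your blocks must be singletons, not just one. As written, your two cases therefore collapse to the two-point configurations, and you lose the entire family with $n\ge 3$ points. The correct characterization is: $X_{\max}-X_1>1$ and every other point of $X\cap[0,w]$ lies in $[X_{\max}-1,\,X_1+1]$ (which forces $X_{\max}-X_1\le 2$ unless $n=2$). The paper splits instead on whether $X\cap[X_1+2,w]$ is empty: if not, only $n=2$ is possible (its Case 1, producing the $\tfrac{\la^2(w-2)^2}{2}e^{-\la w}$ term); if so, it parametrizes by the last point $X_1'$ and integrates a void probability.

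A second point, which vindicates your instinct to double-check the displayed formula: with the corrected characterization, the void region for the intermediate points is $(X_1,X_{\max}-1)\cup(X_1+1,X_{\max})$, of total length $2(X_{\max}-X_1-1)$. The paper's Case 2 imposes only $X\cap(X_1,X_1'-1]=\es$, i.e., that intermediate points be within distance $1$ of the last point, and omits the symmetric constraint relative to $X_1$; its integrand is then the constant $\la^2 e^{-\la(w-1)}$ instead of $\la^2 e^{-\la(w-1)}e^{-\la(x_1'-x_1-1)}$, so it overcounts. Carrying out the corrected integrals gives
\begin{align*}
p'_{\le 1} \;=\; p'_0 \;+\; \frac{\la^2(w-2)^2}{2}e^{-\la w} \;+\; e^{-\la(w-1)}\bigl[(w-2)\la(1-e^{-\la})+\la-(1-e^{-\la})\bigr],
\end{align*}
which agrees with the printed statement (after fixing the typo) only to order $\la^2$, where both reduce to $p'_0+\tfrac{\la^2(w-1)^2}{2}+O(\la^3)$, but differs at order $\la^3$. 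A direct check at $w=2$ via the expansion $\P(M_w=1)=e^{-\la w}\sum_{n\ge 2}\la^n V_n$, with $V_3=\tfrac13$ the volume of ordered triples having exactly one missing edge, confirms the corrected expression (the printed formula would require $V_3=\tfrac12$). So even with the right enumeration you should not expect your plan to land on the theorem exactly as stated.
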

\begin{proof}
	We decompose $p'_{\leq 1}$ as 
	$$p'_{\leq 1} = p'_0 + \P\big(M_w = 1, X\cap [X_1 + 2, w] \ne \es\big) + \P\big(M_w = 1, X\cap [X_1 + 2, w] = \es\big)$$
	and compute the second and third probability separately. This corresponds to Case 1 and Case 2 below. Note that under the event $\{X\cap [X_1 + 2, w] \ne \es\}$, we have $|X\cap [0, w]| = 2$, since more points would imply at least two missing edges. \\ \ \\
	{\bf Case 1. $ X\cap (X_1, X_1 + 2] = \es$ and $|X\cap [ X_1 + 2, w]|= 1$}\\
	\medskip
	Conditioning on $X_1$, the probability of this event becomes
	$$\int_0^{w - 2} \la e^{-\la x_1}\P\big(X \cap (x_1, x_1 + 2] = \es, |X \cap[ x_1 + 2, w]|= 1\big) \, \d x_1.$$
	Inserting the void probability for the Poisson process, we arrive at

	\begin{align*}
		 \int_0^{w - 2} \la e^{-\la x_1}e^{-2\la} \la (w - 2 - x_1) e^{-\la (w - 2 - x_1)}  \d x_1
		= \la^2 e^{-\la w}\int_0^{w - 2} (w - 2 - x_1) \, \d x_1
		= \frac{\la^2(w - 2)^2}2 e^{-\la w}.
	\end{align*}

	It remains to treat the case $X\cap [X_1 + 2, w] = \es$. First, note that conditioned on $X_1 = x_1$, the point process $X \cap [x_1, w]$ is Poisson. Thinking of this Poisson point process to be formally extended to $-\ff$ to the left, we write $X_1'$ for the first Poisson point to the left of $w$. Then, Figure~\ref{miss2_fig} illustrates that the event $\{M_w = 1\} \cap \{ X\cap [X_1 + 2, w] = \es\}$ is equivalent to
	$X_1' \in [X_1 + 1, X_1 + 2 ]$ and $X\cap (X_1, X_1' - 1] = \es$.
%
%
\begin{figure}[!h]
	\begin{tikzpicture}[scale=2]

	\draw (0, -1)--(5, -1);
	\draw[very thick] (0, -1.05)--(0,-.95);
	\draw[very thick] (5, -1.05)--(5,-.95);

	\draw[red, ultra thick] (2.84, -1.00)--(3.13, -1.00);
	\draw[blue, ultra thick] (3.54, -1.00)--(4.24, -1.00);

\fill (2.84, -1) circle (.9pt);
\fill (3.83, -1) circle (.9pt);

\coordinate[label={0}] (A) at (0, -1.35);

\coordinate[label={$X_1$}] (A) at (2.84, -1.35);
\coordinate[label={$X_1'$}] (A) at (3.83, -1.35);
\coordinate[label={$w$}] (A) at (5, -1.35);

\end{tikzpicture}
	\caption{Configuration where $M_w = 1$ and $ X\cap [X_1 + 2, w] = \es$. Here, $[X_1, X_1' - 1]$ is in red and $[X_1 + 1, X_1 + 2]$ in blue.}
	\label{miss2_fig}
\end{figure}
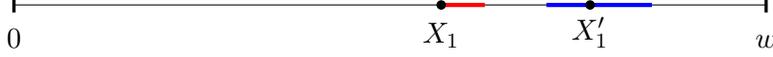

For Case 2, we distinguish between the cases $X_1 \le w - 2$ and $X_1 \in [w - 2, w - 1]$.\\ \ \\
	{\bf Case 2a. $X_1 \le w - 2$, $X_1' \in [X_1 + 1, X_1 + 2]$ and $X \cap (X_1, X_1' - 1] = \es$}\\[1ex]
	Then, we compute the desired probability as 
	\begin{align*}
		\int_0^{w - 2}\la e^{-\la x_1}\int_{x_1 + 1}^{x_1 + 2}\la e^{-\la (w - x_1')}\P\big(X\cap (x_1, x_1' - 1] = \es\big) \, \d x_1' \, \d x_1
		= \int_0^{w - 2}\int_{x_1 + 1}^{x_1 + 2}\la^2 e^{-\la (w - 1)} \, \d x_1' \, \d x_1,
	\end{align*}
	which equals $(w - 2) \la^2e^{-\la (w - 1)}.$\\
	{\bf Case 2b. $X_1 \in [w - 2, w -1]$, $X_1' \in [X_1 + 1, w]$ and $X\cap (X_1, X_1' - 1] = \es$}\\[1ex]
	Finally, 
	\begin{align*}
		\int_{w - 2}^{w - 1}\la e^{-\la x_1}\int_{x_1 + 1}^w\la e^{-\la (w - x_1')}\P\big(X\cap(x_1, x_1' - 1] = \es\big) \, \d x_1' \, \d x_1
		= \int_{w - 2}^{w - 1}\int_{x_1 + 1}^w\la^2 e^{-\la (w - 1)} \, \d x_1' \, \d x_1,
	\end{align*}
which equals $= \frac{\la^2}2 e^{-\la (w - 1)}.$
	Assembling the different cases together concludes the proof.\qed
	\end{proof}

\section{Conditional MC in higher dimensions}
\label{rbSec}
%
%
In Section \ref{smallKSec}, we analyzed rare events related to few edges or few missing edges in a one-dimensional setting. There, the natural ordering of the Poisson points was pivotal to derive closed-form expressions for conditional probabilities. Now, we proceed to higher dimensions and also consider more general deviations from the mean number of edges. First, we again illustrate that substantial variance reductions are possible through a surprisingly simple conditional MC method. Loosely speaking, the Poisson point process consists of 1) an infinite sequence of random points in the window determining the locations of points and 2) a Poisson random variable determining the number of points in the sampling window. We use that after conditioning on the spatial locations, the rare-event probability is available in closed form. This type of Poisson conditioning is novel in a spatial rare-event estimation context, but has strong ties to approaches appearing earlier in seemingly unrelated problems. More precisely, for instance in reliability theory, related conditional MC schemes lead to spectacular variance reductions~\cite{lom,slava}.\\

%
%
The rest of this section is organized as follows. First, Section \ref{rbLowSec} describes how to estimate the rare event probabilities related to too few and too many edges relative to their expected number through conditional MC. Then, Section \ref{rbNumSec} presents a simulation study illustrating that this estimator can reduce the estimation variance by several orders of magnitude.

%
%
\subsection{Conditioning on a spatial component}
\label{rbLowSec}
We consider a full-dimensional sampling window $W \subset \R^d$ and rare events of the form 
$$F_{< a} := \{|G(X \cap W)| < (1 - a)\mu\}\quad \text{ and } \quad F_{>a} := \{|G(X \cap W)| > (1 + a) \mu\},$$
where $\mu := \E[|G(X \cap W)|]$ denotes the mean (i.e., expected) number of edges, which can be estimated through simulations. Alternatively, if $W$ is so large that edge effects can be neglected, then the Slivnyak--Mecke formula \cite[Theorem 4.4]{poisBook} gives the approximation $\mu \approx \frac12|W|\la^2 \k_d$, where $\k_d$ denotes the volume of the unit ball in $\R^d$.\\

The key idea for developing a conditional MC scheme is to use the explicit construction of a Poisson point process in a bounded sampling window. More precisely, let $X_\ff = \{X_n\}_{n \ge 1}$ be an iid family of uniform random points in $W$ and $K$ be an independent Poisson random variable with parameter $\la |W|$. Then, $\{X_n\}_{n \le K}$ is a Poisson point process in $W$ with intensity $\la$, \cite[Theorem 3.6]{poisBook}.\\

In conditional MC, we use the fact that the rare-event probabilities can be computed in closed form after we condition on the locations $X_\ff$. More precisely, we let 
$$K_{<a} := \inf\{k \ge 1:\, |G(\{X_1,\dots, X_k\} \cap W)| \ge (1 - a)\mu\}$$
denote the first time where the Gilbert graph on the nodes $\{X_1, \dots, X_k\}$ contains at least $(1 - a)\mu$ edges and refer to Figure \ref{quant_fig} for an illustration.

%
%
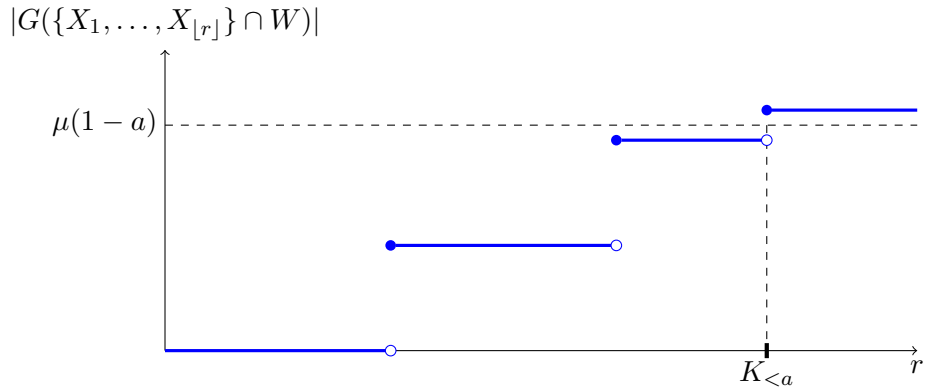
\begin{figure}[!h]
	        \begin{tikzpicture}[scale=2]

	\draw[->] (0, 0)--(5, 0);
	\draw[->] (0, 0)--(0, 2);
	\draw[dashed] (0, 1.5)--(5, 1.5);
	\draw[dashed] (4, 1.5)--(4, 0);
	\draw[ultra thick] (4, -.05)--(4, .05);


	\draw[blue, very thick] (4, 1.6)--(5, 1.6);
	\draw[blue, very thick] (3, 1.4)--(4, 1.4);
	\draw[blue, very thick] (1.5, 0.7)--(3, 0.7);
	\draw[blue, very thick] (0, 0.0)--(1.5, 0);

	\fill[white] (4, 1.6) circle (1pt);
	\fill[white] (4, 1.4) circle (1pt);
	\fill[white] (3, 1.4) circle (1pt);
	\fill[white] (3, 0.7) circle (1pt);
	\fill[white] (1.5, 0.7) circle (1pt);
	\fill[white] (1.5, 0.0) circle (1pt);

	\fill[blue] (4, 1.6) circle (1pt);
	\draw[blue] (4, 1.4) circle (1pt);
	\fill[blue] (3, 1.4) circle (1pt);
	\draw[blue] (3, 0.7) circle (1pt);
	\fill[blue] (1.5, 0.7) circle (1pt);
	\draw[blue] (1.5, 0.0) circle (1pt);

\coordinate[label=below:{$r$}] (A) at (5, 0);
	\coordinate[label={$|G(\{X_1, \dots, X_{\lfloor r\rfloor}\} \cap W)|$}] (A) at (0, 2);
	\coordinate[label={$\mu (1 - a)$}] (A) at (-.4, 1.35);
	\coordinate[label=below:{$K_{<a}$}] (A) at (4, 0);
\end{tikzpicture}
			\caption{Conceptual illustration of $K_{<a}$.}
			        \label{quant_fig}
\end{figure}

Similarly, let 
$$K_{>a} := \sup\{k \ge 1:\, |G(\{X_1,\dots, X_k\} \cap W)| \le (1 + a)\mu\}$$
denote the largest $k$ such that the Gilbert graph on the nodes $\{X_1, \dots, X_k\}$ contains at most $(1 + a)\mu$ edges. Then,
\begin{equation}
	\label{condDimEq}
	\begin{aligned}
		\P(F_{<a}) &= \E[\P(F_{<a}\,|\,X_\ff)] = \E\big[\Pois(K_{<a}(X_\ff))\big],\\
		\P(F_{>a}) &= \E[\P(F_{>a}\,|\,X_\ff)] = \E\big[1 - \Pois(K_{>a}(X_\ff - 1))\big],
	\end{aligned}
\end{equation}
where ${\Pois:\Z_{\ge 0} \to [0, 1]}$ denotes the cumulative distribution function of a Poisson random variable with parameter $\la |W|$. 

%
%
\subsection{Numerical results}
\label{rbNumSec}
We sample planar homogeneous Poisson point processes $\Xs = \{\Xs_i\}_{i \ge 1}$, $\Xm = \{\Xm_i\}_{i \ge 1}$ and $\Xl= \{\Xl_i\}_{i \ge 1}$ with intensity $\la = 2$ in windows of size $20 \times 20$, $25 \times 25$, and $30 \times 30$, respectively. Here, $\ms S, \ms M$, and  $\ms L$ stand for small, medium, and large, respectively.\\

In Section \ref{intrSec}, we mentioned that a major challenge in devising efficient estimators for rare-event probabilities related to the edge count comes from a qualitatively different tail behavior: light on the left, heavy on the right. In other words, we expect that in the left tail, we see changes throughout the sampling window, whereas in the right tail, a singular particular configuration in a small part of the window is sufficient to induce the rare event. We recall that the left tail of a random variable $Z$ refers to the probabilities $\P(Z \le r)$ for small $r$ and the right tail refers to the probabilities $\P(Z \ge r)$ for large $r$. \\

Although on a bounded sampling window, the theoretical difference between the left and the right tail is subtle, we illustrate in Table \ref{tailTab} that it does become visible when considering the quantiles $Q_\a$ and $Q_{1 - \a}$ for the number of edges if $\a$ is small. Here, the empirical quantiles for $10^6$ samples of the edge counts in a $(20 \times 20)$-window are shown. For instance the $1\%$-quantile is $16.4\%$ lower than the mean, which is a similar deviation as the $18\%$ exceedance of the $99\%$-quantile. However, when moving to the $0.01\%$-quantile, then it is $25.2\%$ lower than the mean, whereas the corresponding $99.99\%$ quantile exceeds the mean by $30.0\%$. These figures are an early indication of the difference in the tails that will reappear far more pronouncedly in the numerical results concerning the estimation of the rare-event probabilities $\P(F_{<0.2})$ and $\P(F_{>0.2})$. 
%
%
\begin{table}[!htpb]
	\centering
	\caption{Empirical quantiles for the number of edges in a $(20 \times 20)$-window: absolute values (upper two rows) and relative deviation from the mean $\mu$ (lower two rows).}
	\label{tailTab}

	\begin{tabular}{lllr}
		$\a$ &  $10^{-2}$ & $10^{-3}$ & $10^{-4}$   \\
		\hline
		$Q_\a$ & 2012 & 1892 & 1800\\
		$Q_{1 - \a}$ &  2841 & 2999 & 3130\\[1ex]
		$\frac{Q_\a - \mu}\mu$ & $-0.164$ & $-0.214$ & $-0.252$\\
		$\frac{Q_{1 - \a} - \mu}\mu$ & 0.180 & 0.246 & 0.300\\[1ex]
\end{tabular}
\end{table}

\vspace{-.5cm}
In the rest of the section, we estimate the rare-event probabilities
$$q_{< 0.2} := \P(F_{<0.2}) \quad \text{ and }\quad q_{> 0.2} := \P(F_{>0.2})$$
corresponding to $20 \%$ deviations from the mean.
Here, we draw $N = 10^5$ samples of $X_\ff$. Then, taking into account the representation \eqref{condDimEq}, we set 
\begin{align*}
	\pc_{< 0.2} &:= \frac 1N\sum_{i \le N}\Pois\big(K_{<a}(X_\ff(i))\big),\\
	\pc_{> 0.2} &:= \frac 1N\sum_{i \le N}\big(1 - \Pois\big(K_{>a}(X_\ff(i) - 1)\big)\big).
\end{align*}
The variances of the crude MC estimators are equal to $q_{< 0.2} (1 - q_{< 0.2})$. \\

We report the estimates $\pc_{< 0.2}$ and $\pc_{> 0.2}$ in Table \ref{cmcTab}. First, we see that the exceedance  probabilities $\pc_{> 0.2}$ are always smaller than the corresponding undershoot probabilities $\pc_{< 0.2}$. This supports the preliminary impression of the difference in the tail behavior hinted at in Table \ref{tailTab}. Moreover, in all examples conditional MC reduces the estimation variance massively and the efficiency gains become more pronounced the rarer the event.

%
%
\begin{table}[!h]
	\centering
	\caption{Estimates for $q_{< 0.2}$ and $q_{> 0.2}$ based on conditional MC for different window sizes based on $N = 10^5$ samples. Variance improvements in comparison to the crude estimator in parentheses.}
	\label{cmcTab}

	\begin{tabular}{lllr}
		&  $\pc_{< 0.2}$ & $\pc_{> 0.2}$   \\
		\hline
		$\Xs$ & $2.023 \times 10^{-3} \pm 6.98 \times 10^{-6}\; (414.8)$ & $5.118 \times 10^{-3} \pm 1.63 \times 10^{-5}\; (193.7)$\\
		$\Xm$ & $1.542 \times 10^{-4} \pm 7.05 \times 10^{-7}\; (3106.4)$ & $6.764 \times 10^{-4} \pm 2.77 \times 10^{-6}\; (878.6)$ \\
		$\Xl$ & $6.912 \times 10^{-6} \pm 4.19 \times 10^{-8}\; (39,415.8$) & $6.242 \times 10^{-5} \pm 3.24 \times 10^{-7}\; (5,911.1)$\\
		\hline
	\end{tabular}
\end{table}
\bigskip
\bigskip

\section{Importance sampling}
\label{isSec}

The conditional MC estimators constructed in Section \ref{rbSec} take into account that an atypically large number of Poisson points leads to a Gilbert graph exhibiting considerably more edges than expected. However, not only the number but also the location of points play a  pivotal role. For instance if the points tend to repel each other, then we would typically observe fewer edges. Similarly, clustered point patterns should induce  more edges.\\

In order to implement these insights, we resort to the technique of \emph{importance sampling} \cite[Chapter 9.7]{handbook}. That is, we draw samples from a point process with distribution $\Q$ for which the rare event becomes typical and then correct the estimation bias by weighting with likelihood ratios. In Sections \ref{isLowSec} and \ref{isUpSec} below, we explain how to implement these steps through configuration-dependent birth-death processes reminiscent of the Strauss process from spatial statistics \cite{rasmus}. Finally, in Section \ref{isNumSec}, we illustrate in a simulation study how importance sampling of the spatial locations reduces the estimation variance further.

%
%
\subsection{Lower tails}
\label{isLowSec}
%
%
The key observation is that under the rare event of seeing exceptionally few edges, we expect a repulsion between points. More precisely, the most likely reason for the rare event are changes to the configuration of the underlying Poisson point process throughout the entire sampling window. The large-deviation analysis of \cite{yukLDP2} suggests to perform importance sampling where, instead of considering the distribution $\P$ of the a priori Poisson point process, we draw samples according to a different stationary point process with distribution $\Q$ such that under $\Q$, the original rare event becomes typical and whose deviation from $\P$, as measured through the Kullback--Leibler divergence $h(\Q\,|\,\P)$, is minimized. We implement this repulsion by a dependent thinning inspired from the Strauss process.\\

Here, we start from a realization of the Gilbert graph on $n_0 = \lfloor\la|W|\rfloor$ iid points $\{X_1, \dots, X_{n_0}\}$, and then, we thin out points successively. An independent thinning of points would give rise to uniformly distributed locations without interactions.  In the importance sampling, we thin instead via a configuration-dependent birth-death process; see e.g., Chapter 9.7 of \cite{handbook}.\\

%

To describe the death mechanism more precisely, we draw inspiration from the Strauss process and choose the probability $p_i$ to remove point $X_i$ proportional to $\g^{\deg(X_i)}$, where $\deg(X_i)$ denotes the degree of $X_i$ in the Gilbert graph and $\g > 1$ is a parameter of the algorithm. Algorithm \ref{is_alg} shows the pseudo-code leading to the importance sampling estimator $\pp_{<a}$ for $q_{<a}$. To understand the principle behind Algorithm \ref{is_alg}, we briefly expound on the general approach in importance sampling, and refer the reader to Chapter 9.7 of \cite{handbook} for an in-depth discussion. \\

As mentioned above, when thinning out points independently until the number of edges in the Gilbert graph falls below $\mu (1 - a)$, we would arrive at the random variable $K_{<a}$ from Section \ref{rbSec}. However, thinning out according to a configuration-dependent probability distorts its distribution towards a probability measure $\Q$ that is in general different from the true distribution $\P$. Still, by construction, $\Q$ is absolutely continuous with respect to $\P$, and we let $\rho := \d \P / \d \Q$ be the likelihood ratio. Then, from a conceptual point of view, Algorithm \ref{is_alg} first draws $N\ge1$ iid samples $K_{<a}^{(1)}, \dots, K_{<a}^{(N)}$ from the distorted distribution $\Q$ with associated likelihood ratios $\rho_1, \dots, \rho_N$, and then computes
$$\pp_{<a} := \frac 1N \sum_{i \le N} \rho_i \Pois(K_{<a}(i)).$$

\begin{algorithm}[htb]
\SetAlgoSkip{}
\DontPrintSemicolon
\SetKwInput{Input}{input}\SetKwInOut{Output}{output}
\Input{Number $N \ge 1$ of MC runs; parameter $\g$; Poisson intensity $\la$; sampling window $W$.}
\Output{Importance sampling estimator $\pp_{<a}$ for $q_{<a}$.}

$n_0 \lw \lfloor\la|W|\rfloor$

	$\ms{sum} \lw 0$

\For{$i\le N$}
	{
$X \lw \{X_1, \dots, X_{n_0}\}$ iid uniform in $W$

$\rho \lw 1$

	\While{$|G(X \cap W)| \le \mu (1 - a)$}
	{
draw $X_i \in X$ with probability $r_i = \g^{\deg(X_i)} / \sum_{X_j \in X}\g^{\deg(X_j)}$

		$\rho \lw \rho / (|X| r_i)$

	$X \lw X \setminus \{X_i\}$

		}

		$\ms{sum} \lw \ms{sum} + \rho \Pois(|X|)$
		}

	\KwRet{$\ms{sum}/N$}
\caption{Importance sampling estimator $\pp_{<a}$ for $q_{<a}$}
	\label{is_alg}
\end{algorithm}

Intuitively, we would like to choose the thinning parameter $\g > 1$ such that the number of edges in the rare event should match the expected number of edges under the thinning. To develop a heuristic for this choice, we consider a Strauss process, where we restrict to the two-dimensional setting to allow for an accessible presentation.
 Thus,
\begin{align}
	        \label{strEq}
			(1 - a) \mu = (\la^\Str)^2\int_0^1 \pi r \rho^\Str(r) \d r,
\end{align}
where $\la^\Str > 0$ and $\rho^\Str:\,[0,\ff) \to [0,\ff)$ denote the intensity and pair-correlation function of the Strauss process, respectively \cite{rasmus}. In contrast to the Poisson setting, neither $\la^\Str$ nor $\rho^\Str$ are available in closed form. Still, both quantities admit accurate saddle-point approximations $\lap > 0$ and $\rp:\, [0, \ff) \to [0,\ff)$ that are ready to implement in the Strauss case \cite{nair}.\\

%
%
More precisely, $\lap$ is the unique positive solution of the equation
$$\lap G e^{\lap G} = \la G,$$
where $G = (1 - \b) \pi$ and $\b = \log(\g)$, see \cite{nair}. Then, recalling that we work in a planar setting, we put
$$\rp(r) := \b \exp\big((1 - \b)^2 b(r/2) \lap\big),$$
where
$$b(r/2) := 2  \cos^{-1}(r/2) - r \sqrt{1 - (r/2)^2} $$
denotes the intersection area of two unit disks at distance $r$.  Inserting these approximations into \eqref{strEq} and solving the resulting implicit equation yields $\b \approx \log(1.018)$ and a value of $\g \approx 1.018$.\\

%
%
\subsection{Upper tails}
\label{isUpSec}
Similar to the lower tails, we can strengthen the estimator by combining conditional MC with importance sampling on the spatial locations. For the upper tails, it is natural to devise an importance sampling scheme favoring clustering rather than repulsion. From the point of view of large deviations, this phenomenon was considered in \cite{harel}. There, it is shown that all excess edges come from a ball of size 1 containing a highly dense configuration of Poisson points.  More precisely, the method is particularly powerful in settings where the rare event is the epitome of a condensation phenomenon. That is, a peculiar behavior of the point process in a small part in the window becomes the most likely explanation of the rare event. As laid out in \cite{harel}, at least in the asymptotic regime of large deviations, the rare event of observing too many edges is governed by the above-described condensation phenomenon.  \\

We propose to take the clustering into account via a birth mechanism favoring the generation of points in areas that would lead to a large number of additional edges. To ease implementation, the density of the birth mechanism is discretized and remains constant in bins of a suitably chosen grid. The density in a bin at position $x \in W$ is proportional to $\g^{n(x)}$, where $\g > 1$ is a parameter governing the strength of the clustering and $n(x)$ denotes the number of Poisson points in a suitable neighborhood around $x$, such as the bin containing $x$ together with all adjacent bins.\\

Similar to the lower tails case, a subtle point in this approach pertains choosing the parameter $\g > 1$. Unfortunately, an attractive Strauss process is ill-defined in the entire Euclidean space, so that the saddle-point approximation from Section \ref{isLowSec} does not apply. In Section \ref{isNumSec} below, we therefore rely on a pilot run suggesting $\g = 1.01$ as a good choice for further variance reduction. Although in this pilot run, the number of simulations is small, and estimates of the variance are still volatile, we found that it provides a good indication for simulations on a larger scale.

%
%
\subsection{Numerical results}
\label{isNumSec}

Section \ref{rbNumSec} revealed that even with a sample size of $N = 10^5$ the conditional MC estimators still exhibit a considerable relative error. Now, we illustrate that importance sampling may be an appealing option to reduce this error.\\

%
%
Table \ref{poisTab} reports the estimates $\pp_{<0.2}$ and $\pp_{>0.2}$ for different sizes of the sampling window as in Section~\ref{rbNumSec}. In the left tail, we see massive variance improvements when compared to the crude MC estimator. In the right tail, the gains are substantial, but a little less pronounced. This matches the intuition that the root of the rare events in the right tails should be a condensation phenomenon, which goes against the heuristic of changing the point process homogeneously throughout the window. 
         \begin{table}[!htpb]
		 \centering
			   \caption{Estimates for $q_{<0.2}$ and $q_{>0.2}$ based on importance sampling with Strauss-type processes for different window sizes  based on $N = 10^5$ samples. Variance improvements in comparison to the crude estimator in parentheses.}
			            \label{poisTab}

		 \begin{tabular}{lllr}
			 &  $\pp_{<0.2}$ & $\pp_{>0.2}$   \\
			 \hline
			 $\Xs$ & $2.025 \times 10^{-3} \pm 6.22 \times 10^{-6} (523.3)$ & $5.125 \times 10^{-3} \pm 1.57 \times 10^{-5} (207.9)$\\
			 $\Xm$ & $1.544 \times 10^{-4} \pm 6.16 \times 10^{-7} (4071.0)$ & $6.744 \times 10^{-4} \pm 2.66 \times 10^{-6} (951.8)$ \\
			 $\Xl$ & $6.935 \times 10^{-6} \pm 3.63 \times 10^{-8} (52,665.8$) & $6.240 \times 10^{-5} \pm 3.09 \times 10^{-7} (6,537.22)$\\[1ex]
		 \end{tabular}
				             \end{table}

\section*{Acknowledgments}
Major parts of this research were carried out while Christian Hirsch was on a research visit at the University of Queensland and while Sarat B.~Moka was on a research visit at Ulm University. We thank both hosts for their hospitality and the Australian Research Council Centre of Excellence for Mathematical and Statistical Frontiers (ACEMS) for the support under grant number CE140100049.

\bibliography{lit}

\begin{thebibliography}{10}
\providecommand{\url}[1]{{#1}}
\providecommand{\urlprefix}{URL }
\expandafter\ifx\csname urlstyle\endcsname\relax
  \providecommand{\doi}[1]{DOI~\discretionary{}{}{}#1}\else
  \providecommand{\doi}{DOI~\discretionary{}{}{}\begingroup
  \urlstyle{rm}\Url}\fi

\bibitem{asmussen}
Asmussen, S., Kroese, D.P.: Improved algorithms for rare event simulation with
  heavy tails.
\newblock Adv. in Appl. Probab. \textbf{38}(2), 545--558 (2006)

\bibitem{nair}
Baddeley, A., Nair, G.: Approximating the moments of a spatial point process.
\newblock Stat \textbf{1}(1), 18--30 (2012)

\bibitem{bhamidi}
Bhamidi, S., Hannig, J., Lee, C.Y., Nolen, J.: The importance sampling
  technique for understanding rare events in {E}rd{\H{o}}s-{R}\'{e}nyi random
  graphs.
\newblock Electron. J. Probab. \textbf{20}, no. 107, 30 (2015)

\bibitem{b2}
Billingsley, P.: Probability and Measure, third edn.
\newblock J.~Wiley \& Sons, Inc., New York (1995)

\bibitem{rw}
Blanchet, J., Glynn, P.: Efficient rare-event simulation for the maximum of
  heavy-tailed random walks.
\newblock Ann. Appl. Probab. \textbf{18}(4), 1351--1378 (2008)

\bibitem{harel}
Chatterjee, S., Harel, M.: Localization in random geometric graphs with too
  many edges.
\newblock Ann. Probab. \textbf{48}(2), 574--621 (2020)

\bibitem{handbook}
Kroese, D.P., Taimre, T., Botev, Z.I.: Handbook of Monte Carlo Methods.
\newblock J.~Wiley \& Sons, New York (2013)

\bibitem{poisBook}
Last, G., Penrose, M.D.: Lectures on the Poisson Process.
\newblock Cambridge University Press, Cambridge (2017)

\bibitem{lom}
Lomonosov, M., Shpungin, Y.: Combinatorics of reliability {M}onte {C}arlo.
\newblock Random Structures Algorithms \textbf{14}(4), 329--343 (1999)

\bibitem{rasmus}
M{\o}ller, J., Waagepetersen, R.P.: Statistical Inference and Simulation for
  Spatial Point Processes.
\newblock CRC, Boca Raton (2004)

\bibitem{rev}
Rojas-Nandayapa, L.: A review of conditional rare event simulation for tail
  probabilities of heavy tailed random variables.
\newblock Bol. Soc. Mat. Mexicana (3) \textbf{19}(2), 159--182 (2013)

\bibitem{yukLDP2}
Sepp{\"a}l{\"a}inen, T., Yukich, J.E.: Large deviation principles for
  {E}uclidean functionals and other nearly additive processes.
\newblock Probab. Theory Related Fields \textbf{120}(3), 309--345 (2001)

\bibitem{slava}
Vaisman, R., Kroese, D.P., Gertsbakh, I.B.: Improved sampling plans for
  combinatorial invariants of coherent systems.
\newblock IEEE Trans. Rel. \textbf{65}(1), 410--424 (2015)

\end{thebibliography}
\bibliographystyle{spmpsci}

\end{document}